\def\Xint#1{\mathchoice
	{\XXint\displaystyle\textstyle{#1}}%
	{\XXint\textstyle\scriptstyle{#1}}%
	{\XXint\scriptstyle\scriptscriptstyle{#1}}%
	{\XXint\scriptscriptstyle\scriptscriptstyle{#1}}%
	\!\int}
\def\XXint#1#2#3{{\setbox0=\hbox{$#1{#2#3}{\int}$}
		\vcenter{\hbox{$#2#3$}}\kern-.5\wd0}}
\def\dashint{\Xint-}
\newcommand{\ddu}{\frac{d}{du}_{|u=0}}
\newcommand{\Z}{\mathbb{Z}}
\newcommand{\N}{\mathbb{N}}
\newcommand{\Q}{\mathbb{Q}}
\newcommand{\R}{\mathbb{R}}
\newcommand{\DDs}{\frac{\partial}{\partial s}}
\newcommand{\Cinf}{\mathcal{C}^{\infty}}
\newcommand{\Tr}{\mathbb{T}}
\newcommand{\Chi}{\Large\raisebox{2pt}{$\chi$}}
\newcommand{\vect}{\mathrm{span}}
\newcommand{\Lie}{\mathrm{Lie}}
\newcommand{\Diff}{\mathrm{Diff}}
\newcommand{\Aut}{\mathrm{Aut}}
\newcommand{\supp}{\mathrm{supp}}
\newcommand{\g}{\mathfrak{g}}
\def \rat{ {\rm Q}\kern-.65em {}^{{}_/ }}
\newtheorem{Theorem}{Theorem}[subsection]
\newtheorem{Propo}{Proposition}[subsection]
\newtheorem{Corol}{Corollary}[subsection]
\newtheorem{Lemma}{Lemma}[subsection]
\newtheorem{Remark}{Remark}[subsection]
\title{A compact manifold with infinite-dimensional co-invariant cohomology}
\begin{document}

\author{Mehdi Nabil}                 

\address{%
	M. Nabil\\               
	Cadi Ayyad University, Faculty of Sciences Semlalia, Department of Mathematics, Marrakesh. Morocco\\            
	mehdi1nabil@gmail.com                
}

\date{\today}
\maketitle

{\noindent\bf Abstract.} Let $M$ be a smooth manifold. When $\Gamma$ is a group acting on $M$ by diffeomorphisms one can define the $\Gamma$-co-invariant cohomology of $M$ to be the cohomology of the complex $\Omega_c(M)_\Gamma=\vect\{\omega-\gamma^*\omega,\;\omega\in\Omega_c(M),\;\gamma\in\Gamma\}$. For a Lie algebra $\mathcal{G}$ acting on the manifold $M$, one defines the cohomology of $\mathcal{G}$-divergence forms to be the cohomology of the complex $\mathcal{C}_{\mathcal{G}}(M)=\vect\{L_X\omega,\;\omega\in\Omega_c(M),\;X\in\mathcal{G}\}$. In this short paper we present a situation where these two cohomologies are infinite dimensional. 
\vskip 0.2cm

\noindent{\bf Mathematics Subject Classification 2010:} 57S15, 14F40.\\
{\bf Keywords:}  Cohomology, Transformation Groups.

\vspace{1cm}

\maketitle 

\section{Introduction}\label{section1}
In \cite{ABN}, the authors have introduced the concept of co-invariant cohomology. In basic terms it is the cohomology of a subcomplex of the de Rham complex generated by the action of a group on a smooth manifold. The authors showed that under nice enough hypotheses on the nature of the action, there is an interplay between the de Rham cohomology of the manifold, the cohomology of invariant forms and the co-invariant cohomology, and this relationship can be exhibited either by vector space decompositions or through long exact sequences depending on the case of study (Theorems $1.1$ and $1.3$ in \cite{ABN}). Among the various consequences that can be derived from this inspection, it is evident that the dimension of de Rham cohomology has some control over the dimension of the co-invariant cohomology, and in most cases presented in \cite{ABN} the latter is finite whenever the former is. This occurs for instance in the case of a finite action on a compact manifold or more generally in the case of an isometric action on a compact oriented Riemannian manifold, and this fact holds as well for a non-compact manifold as long as one requires the action to be free and properly discontinuous with compact orbit space. A concept closely related to co-invariant cohomology is the cohomology of divergence forms, which is defined by means of a Lie algebra action on a smooth manifold and was introduced by A. Abouqateb in \cite{Abouqateb}. In the course of his study, the author gave many examples where the cohomology of divergence forms is finite-dimensional.

The goal of this paper is to show that this phenomenon heavily depends on the nature of the action in play, and that without underlying hypotheses, co-invariant cohomology and cohomology of divergence forms are not generally well-behaved. This is illustrated by an example of a vector field action on a smooth compact manifold giving rise to infinite-dimensional cohomology of divergence forms and whose discrete flow induces an infinite-dimensional co-invariant cohomology as opposed to the de Rham cohomology of the manifold. This shows in particular that many results obtained in \cite{ABN} and \cite{Abouqateb} cannot be easily generalized and brings into perspective the necessity to look for finer finiteness conditions of co-invariant cohomology in a future study which would put the present paper in a broader context.\\

The general outline of the paper is as follows : In the first paragraph, we briefly recall the notions of co-invariant forms and divergence forms, then we define an homomorphism of the de Rham complex that is induced by a complete vector field on the manifold, and which maps divergence forms relative to the action of the vector field onto the complex of co-invariant differential forms associated to its discrete flow (see \eqref{opcodiv} and Proposition \ref{propcodiv}). The next paragraph is concerned with the setting on which our cohomology computations will take place, it comprises a smooth compact manifold, the $3$-dimensional hyperbolic torus, which can be obtained as the quotient of a solvable Lie group by a uniform lattice (the construction given here is that of A. EL Kacimi in \cite{KacimiManuscripta}), the Lie algebra action considered is by means of a left-invariant vector field. We then use a number of results to prove Theorem \ref{thcodiv} which states that the operator defined in \eqref{opcodiv} is an isomorphism between the complex of divergence forms and the complex of co-invariant forms, hence allowing to only consider the cohomology of co-invariant forms for computation. Finally, the last paragraph is dedicated to the main computation in which we prove that the discrete flow of the vector field in question on the hyperbolic torus gives infinite-dimensional co-invariant cohomology.
\subsection*{Acknowledgement} The author would like to thank Abdelhak Abouqateb for his helpful discussions and advice concerning this paper. 
\newpage
\section{Preliminaries}
Let $M$ be a smooth $n$-dimensional manifold and denote $\Diff(M)$ the group of diffeomorphisms of $M$ and $\Chi(M)$ the Lie algebra of smooth vector fields on $M$. Let $\rho:\Gamma\longrightarrow\Diff(M)$ be an action of a group $\Gamma$ on $M$ by diffeomorphisms. For an $r$-form $\omega$ on $M$ and element $\gamma\in\Gamma$, we denote $\gamma^*\omega$ the pull-back of $\omega$ by the diffeomorphism $\rho(\gamma):M\longrightarrow M$. Let $\Omega_c(M)=\oplus_p\Omega_c^p(M)$ denote the de Rham complex of forms with compact support on $M$ and put:
$$\Omega_c^p(M)_\rho:=\vect\{\omega-\gamma^*\omega,\;\gamma\in\Gamma,\;\omega\in\Omega_c^p(M)\}.$$
Any element of $\Omega_c^p(M)_\rho$ is called a $\rho$-co-invariant or just a ($\Gamma$-)co-invariant when there is no ambiguity. The graded vector space $\Omega_c(M)_\rho:=\oplus_p\Omega_c^p(M)_\rho$ is a differential subcomplex of the de Rham complex $\Omega_c(M)$, it is called the complex of co-invariant differential forms on $M$. When $M$ is compact this complex is simply denoted $\Omega(M)_\rho$. In the case where $\rho:\Z\longrightarrow M$ is the action induced by a diffeomorphism $\gamma:M\longrightarrow M$, i.e $\rho(n):=\gamma^n$, then we get that:
$$\Omega^p_c(M)=\{\omega-\gamma^*\omega,\;\omega\in\Omega^p_c(M)\}.$$
Let $\tau:\mathcal{G}\longrightarrow \Chi(M)$ be a Lie algebra homomorphism and denote $\hat{X}:=\tau(X)$ for any $X\in\mathcal{G}$ then define:
$$\mathcal{C}^p_\tau(M):=\vect\{L_{\hat{X}}\omega,\;X\in\mathcal{G},\;\omega\in\Omega^p_c(M)\}.$$
Any element of $\mathcal{C}^p_\tau(M)$ is called a $\tau$-divergence $p$-form or simply $\mathcal{G}$-divergence form. The graded vector space $\mathcal{C}_\tau(M):=\oplus_p\mathcal{C}^p_\tau(M)$ is a differential subcomplex of the de Rham complex. If $X$ is any vector field on $M$, with corresponding Lie algebra homomorphism $\tau:\R\longrightarrow\Chi(M)$, $\tau(1):=X$ then:
$$\mathcal{C}^p_\tau(M)=\vect\{L_X\omega,\;\omega\in\Omega^p_c(M)\}.$$
In what follows, $X\in\Chi(M)$ is a complete vector field and $\phi:M\times[0,1]\longrightarrow M$ is the flow $\phi^X$ of the vector field $X$ restricted to $M\times[0,1]$. We define the linear operator $I:\Omega(M)\longrightarrow\Omega(M)$ by the expression:
\begin{equation}
\label{opcodiv}
I(\eta):=\dashint_0^1 \phi^*\eta\wedge\mathrm{pr}_2^*(ds)
\end{equation}
where $\Large\raisebox{2pt}{$\dashint_0^1$}:\Omega^*(M\times[0,1])\longrightarrow\Omega^{*-1}(M)$ is the fiberwise integration operator of the trivial bundle $M\times[0,1]\overset{\mathrm{pr}_1}{\longrightarrow} M$ (see \cite{G.H.V.2}) and $ds$ the usual volume form on $[0,1]$.\\ Let $\tau:\R\longrightarrow \Chi(M)$ be the Lie algebra homomorphism induced by $X$ and let $\rho:\Z\longrightarrow\Diff(M)$ be the discrete flow of $X$ i.e the group action given by $\rho(n):=\phi_n^X$.
\begin{Propo}
\label{propcodiv}
The operator $I:\Omega(M)\longrightarrow\Omega(M)$ defined by $\eqref{opcodiv}$ is a differential complex homomorphism i.e $I\circ d=-d\circ I$. Moreover $I(\mathcal{C}_\tau(M))\subset \Omega_c(M)_\rho$ and the restriction of $I:\mathcal{C}_\tau(M)\longrightarrow \Omega_c(M)_\rho$ is surjective.
\end{Propo}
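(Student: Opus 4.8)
The plan is to establish the three assertions of Proposition~\ref{propcodiv} separately, relying on the standard homotopy calculus for fiberwise integration. First I would record the basic identity for the fiberwise integration operator $\dashint_0^1$ over the trivial bundle $M\times[0,1]\to M$. Writing $d_{M\times[0,1]}$ for the total exterior derivative, the key formula (see \cite{G.H.V.2}) is the Stokes-type relation
\begin{equation*}
d_M\circ\dashint_0^1 + \dashint_0^1\circ\, d_{M\times[0,1]} = (\text{evaluation at }s=1) - (\text{evaluation at }s=0),
\end{equation*}
valid on forms that already contain the $ds$-factor appropriately; more precisely, applied to a form of the type $\phi^*\eta\wedge\mathrm{pr}_2^*(ds)$ the boundary terms drop because $\mathrm{pr}_2^*(ds)$ restricts to zero on the fibers $M\times\{0\}$ and $M\times\{1\}$ after the wedge, so one is left with $d_M I(\eta) = -\dashint_0^1 d_{M\times[0,1]}\big(\phi^*\eta\wedge\mathrm{pr}_2^*(ds)\big)$. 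Since $\phi$ is a smooth map and $d$ commutes with pull-back, $d_{M\times[0,1]}(\phi^*\eta\wedge\mathrm{pr}_2^*ds) = \phi^*(d\eta)\wedge\mathrm{pr}_2^*ds$ (the $ds$-factor being closed), whence $d_M I(\eta) = -I(d\eta)$. That is the first claim.

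For the inclusion $I(\mathcal{C}_\tau(M))\subset\Omega_c(M)_\rho$, the main computational input is an explicit evaluation of $I$ on a Lie derivative. Using the flow property $\phi_t\circ\phi_s = \phi_{t+s}$ and Cartan's formula, one has $\phi_s^*(L_X\omega) = \frac{d}{ds}\phi_s^*\omega$, so that
\begin{equation*}
I(L_X\omega) = \dashint_0^1 \phi_s^*(L_X\omega)\wedge\mathrm{pr}_2^*(ds) = \int_0^1 \frac{d}{ds}\big(\phi_s^*\omega\big)\,ds = \phi_1^*\omega - \omega = -\big(\omega-\rho(1)^*\omega\big),
\end{equation*}
where I am using $\rho(1)=\phi_1^X$ and reading the fiberwise integral of $\alpha_s\wedge\mathrm{pr}_2^*ds$ as the ordinary integral $\int_0^1\alpha_s\,ds$. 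The right-hand side lies in $\Omega_c(M)_\rho$ by definition (and has compact support because $\omega$ does and $\phi_1$ is a diffeomorphism). Since the forms $L_X\omega$ span $\mathcal{C}_\tau(M)$ and $I$ is linear, this proves the inclusion, and simultaneously it proves surjectivity: every generator $\omega-\rho(1)^*\omega$ of $\Omega_c(M)_\rho$ is $-I(L_X\omega)$, hence lies in the image; by linearity $I:\mathcal{C}_\tau(M)\to\Omega_c(M)_\rho$ is onto. Note here that for the discrete flow of a single vector field the group $\Gamma=\Z$ is generated by $\rho(1)$, so that $\Omega_c(M)_\rho$ is spanned just by the $\omega-\rho(1)^*\omega$; this is exactly the reduction recorded in the Preliminaries.

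I expect the only delicate point to be bookkeeping with the fiberwise integration operator: getting the sign in $I\circ d = -d\circ I$ right, and justifying that the boundary contributions in the Stokes formula vanish when the integrand carries the factor $\mathrm{pr}_2^*(ds)$. This is where one must be careful about orientation conventions for $\dashint_0^1$ and about the degree shifts. Once the identity $I(L_X\omega) = \phi_1^*\omega-\omega$ is in hand, the inclusion and the surjectivity are immediate, and the compatibility with $d$ follows formally from the corresponding property of $\dashint_0^1$ together with $d\big(\mathrm{pr}_2^*ds\big)=0$ and naturality of pull-back. It may also be worth remarking that $I$ need not be injective, which is consistent with the fact that only the surjectivity is asserted; the kernel will be analysed later when $I$ is upgraded to an isomorphism on the hyperbolic torus.
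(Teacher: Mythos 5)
Your proposal is correct and follows essentially the same route as the paper: anticommutation with $d$ via the fiberwise Stokes formula with the boundary terms killed by $\iota_s^*\mathrm{pr}_2^*(ds)=0$, and the key identity $I(L_X\omega)=\phi_1^*\omega-\omega$ yielding both the inclusion and surjectivity (using the $\Z$-action reduction from the Preliminaries). The only cosmetic difference is that you derive the key identity directly from $\tfrac{d}{ds}\phi_s^*\omega=\phi_s^*L_X\omega$ and the fundamental theorem of calculus, whereas the paper routes it through $\phi^*\circ L_X=L_{(0,\partial/\partial s)}\circ\phi^*$, Cartan's formula and fiberwise Stokes, and separately verifies compact support of $I(\eta)$, which in your version comes for free from the identity.
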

\begin{proof}
	Let $\eta\in\Omega(M)$ and denote $\iota_s:M\longrightarrow M\times\{s\}\hookrightarrow M\times[0,1]$ be the natural inclusion, then using Stokes formula for fiberwise integration we get that:
	$$ I(d\eta)=\dashint_0^1 \phi^*(d\eta)\wedge \mathrm{pr}_2^*(ds)=\dashint_0^1 d(\phi^*\eta\wedge \mathrm{pr}_2^*(ds))=-dI(\eta)+\big[\iota_s^*(\phi^*\eta\wedge \mathrm{pr}_2^*ds)\big]_0^1,$$
	and since $\iota_s^*\mathrm{pr}_2^*(ds)=0$ then $I(d\eta)=-dI(\eta)$. For the second claim we start by showing that $I(\eta)$ has compact support whenever $\eta$ does. Indeed assume $\eta\in\Omega_c(M)$ and denote $K:=\supp(\eta)$, next consider the map:
	$$ f:M\times\R\longrightarrow M,\;\;\; (x,s)\mapsto\phi_s^{-1}(x):=\phi(x,-s),$$
	Then $f$ is continuous and therefore $L:=f(K\times[0,1])$ is compact. For any $y\in M\setminus L$ and any $s\in [0,1]$ we get that $\phi_s(y)\notin K$ and therefore $(\phi^*\eta)_{(y,s)}=0$, this implies that $I(\eta)_y=0$. We conclude that $\supp\;I(\eta)\subset L$ i.e $I(\eta)\in\Omega_c(M)$.\\
	From the relation $\mathrm{T}_{\small(x,t)}\phi(0,1)=X_{\phi_t(x)}$ one gets that $\phi^*\circ i_X=i_{(0,\DDs)}\circ\phi^*$ and therefore $\phi^*\circ L_X=L_{(0,\DDs)}\circ\phi^*$. Moreover we have that $$L_{(0,\DDs)}\mathrm{pr}_2^*(ds)=0\;\;\;\text{and}\;\;\;\mathlarger\dashint_0^1\circ i_{(0,\DDs)}=0.$$ 
	If we write $\eta=L_X\omega$ for some $\omega\in\Omega_c(M)$ then we get that:
	\begin{eqnarray*}
		I(L_X\omega)&=&\dashint_0^1\phi^*(L_X\omega)\wedge \mathrm{pr}_2^*(ds)\\
		&=&\dashint_0^1L_{(0,\DDs)}(\phi^*\omega)\wedge \mathrm{pr}_2^*(ds)\\
		&=&\dashint_0^1L_{(0,\DDs)}(\phi^*\omega\wedge \mathrm{pr}_2^*(ds))\\
		&=&\dashint_0^1d\circ i_{(0,\DDs)}(\phi^*\omega\wedge \mathrm{pr}_2^*(ds))+\dashint_0^1i_{(0,\DDs)} d(\phi^*\omega\wedge \mathrm{pr}_2^*(ds))\\
		&=& d\left(\dashint_0^1 i_{(0,\DDs)}(\phi^*\omega\wedge \mathrm{pr}_2^*(ds))\right)+[\iota_s^*i_{(0,\DDs)}(\phi^*\omega\wedge \mathrm{pr}_2^*(ds))]_0^1\\[.1in]
		&=& [\phi_s^*\omega]_0^1\\[.1in]
		&=& \phi_1^*\omega-\phi_0^*\omega\\[.1in]
		&=&\phi_1^*\omega-\omega.
	\end{eqnarray*}
	It follows that $I(\mathcal{C}_\tau(M))\subset\Omega_c(M)_\rho$. This also shows that $I:\mathcal{C}_\tau(M)\longrightarrow\Omega_c(M)_\rho$ is surjective.
\end{proof}
\begin{Remark}
Note that $\phi^X$-invariant forms on $M$ are fixed by $I$ i.e if $\omega\in\Omega(M)$ such that $L_X\omega=0$ then $I(\omega)=\omega$.
\end{Remark}
\section{The hyperbolic torus}
\noindent Consider $A\in \mathrm{SL}(2,\Z)$ with $\mathrm{tr}(A)>2$. It is easy to check that $A=PDP^{-1}$ for some $P\in\mathrm{GL}(2,\R)$ and $D=\mathrm{diag}(\lambda,\lambda^{-1})$. Clearly $\lambda>0$ and $\lambda\neq 1$. Hence it makes sense to set $D^t=\mathrm{diag}(\lambda^t,\lambda^{-t})$ and define $A^t=PD^tP^{-1}$ for any $t\in\R$. Next we define the Lie group homomorphism:
$$\phi:\R\longrightarrow\Aut(\R^2),\;\;\;t\mapsto A^t$$
The hyperbolic torus $\mathbb{T}^3_A$ is the smooth manifold defined as the quotient
$\Gamma_3\backslash G_3$ where $G_3:=\R^2\rtimes_\phi \R$ and $\Gamma_3:=\Z^2\rtimes_\phi\Z$. The natural projection $\R^2\rtimes_\phi\R\overset{p}\longrightarrow \R$ induces a fiber bundle structure $\Tr^3_A\overset{p}{\longrightarrow}\mathbb{S}^1$ with fiber type $\mathbb{T}^2$ and $p[x,y,t]=[t]$.\\$\ $\\ If $(1,a)$ and $(1,b)$ are the eigenvectors of $A$ respectively associated to the eigenvalues $\lambda$ and $\lambda^{-1}$ then:
$$ v=(1,a,0),\;\;\;w=(1,b,0)\;\;\;\text{and}\;\;\;e=(0,0,-\log(\lambda)^{-1}),$$
forms a basis of $\g_3=\Lie(G_3)$, and we can check that:
\begin{equation}
\label{brackethyp}
[v,w]_{\g_3}=0,\;\;\;[e,v]_{\g_3}=-v,\;\;\;\text{and}\;\;\;[e,w]_{\g_3}=w.
\end{equation}

\noindent Denote $X$, $Y$ and $Z$ the left invariant vector fields on $\R^2\rtimes_\phi\R$ associated to $v$, $w$ and $e$ respectively, then $\{X,Y,Z\}$ defines a parallelism on $\Tr_3^A$, a direct calculation leads to:
\begin{equation}
\label{vechyp}
X=\lambda^t\left(\frac{\partial}{\partial x}+a\frac{\partial}{\partial y}\right),\;\;\;Y=\lambda^{-t}\left(\frac{\partial}{\partial x}+b\frac{\partial}{\partial y}\right)\;\;\;\text{and}\;\;\; Z=-\log(\lambda)^{-1}\frac{\partial}{\partial t}.
\end{equation}
Now denote $\alpha$, $\beta$ and $\theta$ the dual forms associated to $X$, $Y$ and $Z$ respectively. It is clear that the vector fields $X$ and $Y$ of $\Tr^3_A$ are tangent to the fibers of the fiber bundle $\Tr^3_A\overset{p}{\longrightarrow} \mathbb{S}^1$, and that $\theta=-(\log\lambda)p^*(\sigma)$ where $\sigma$ is the invariant volume form on $\mathbb{S}^1$ satisfying $\int_{\mathbb{S}^1}\sigma=1$. Assume in what follows that the eigenvalue $\lambda$ of $A$ is irrational, then from the relation:
$$ A\left(\begin{matrix}
1\\a\end{matrix}\right)=\lambda\left(\begin{matrix}
1\\a\end{matrix}\right)$$
we deduce that $a\in\R\setminus\Q$. This remark leads to:
\begin{Propo}
	\label{propoXhyp}
	The orbits of the vector field $X$ defined in \eqref{vechyp} are dense in the fibers of the fiber bundle $\Tr^3_A\overset{p}{\longrightarrow} \mathbb{S}^1$. In particular for any $f\in\Cinf(\Tr^3_A)$, $X(f)=0$ is equivalent to $f=p^*(\phi)$ for some $\phi\in\Cinf(\mathbb{S}^1)$.
\end{Propo}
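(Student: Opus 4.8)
The plan is to reduce everything to the classical fact that a constant vector field of irrational slope on the $2$-torus has dense orbits. First I would make the fibres of $p$ explicit. Writing a general element of $G_3=\R^2\rtimes_\phi\R$ as $(x,y,t)$, the left action of $\Gamma_3=\Z^2\rtimes_\phi\Z$ reads $(m,n,k)\cdot(x,y,t)=\big((m,n)+A^k(x,y),\,k+t\big)$. An element $(m,n,k)\in\Gamma_3$ carries the affine slice $\{t=t_0\}$ of $G_3$ into itself precisely when $k=0$, and then it acts on that slice by the translation $(x,y)\mapsto(x,y)+(m,n)$. Consequently the map $\R^2\to p^{-1}([t_0])$, $(x,y)\mapsto[x,y,t_0]$, descends to a diffeomorphism $\Tor=\R^2/\Z^2\xrightarrow{\ \sim\ }p^{-1}([t_0])$ for every $t_0$.

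Next I would describe the $X$-orbits inside a fibre. Since $v=(1,a,0)$ lies in the abelian normal factor $\R^2\subset\g_3$, one has $\exp(sv)=(s,sa,0)$, so the flow of the left-invariant field $X$ through $(x_0,y_0,t_0)$ is $s\mapsto(x_0,y_0,t_0)\cdot(s,sa,0)=\big((x_0,y_0)+sA^{t_0}(1,a),\,t_0\big)$; as $(1,a)$ is a $\lambda$-eigenvector of $A$ this equals $\big(x_0+s\lambda^{t_0},\,y_0+s\lambda^{t_0}a,\,t_0\big)$. In particular $X$ is tangent to the fibres of $p$ (as was already observed), and under the identification above its orbit through $[x_0,y_0,t_0]$ is the image in $\Tor$ of the straight line of slope $a$ through $(x_0,y_0)$ --- the positive factor $\lambda^{t_0}$ only reparametrizes it. Because $a\in\R\setminus\Q$ (this is exactly the fact deduced just before the statement from $A\binom{1}{a}=\lambda\binom{1}{a}$ together with $\lambda\notin\Q$), Kronecker's density theorem for linear flows on the torus says this image is dense in $\Tor$. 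This proves the first assertion.

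For the ``in particular'' I would argue as follows. If $f=p^*\phi$ then, since $dp(X)=0$, we get $X(f)=d\phi\circ dp(X)=0$. Conversely suppose $X(f)=0$; then $f$ is constant along every orbit of $X$, hence --- the orbits being dense in the fibres and $f$ continuous --- constant on each fibre of $p$. Therefore $f=p^*\phi$ for a unique function $\phi$ on $\mathbb{S}^1$, and $\phi$ is smooth because $p$ is a surjective submersion: near any point of $\mathbb{S}^1$ it admits a smooth local section $\sigma$, and there $\phi=f\circ\sigma$. The only step that needs genuine care is the identification of the fibres together with the verification that $X$ restricts on each fibre to a constant field of the fixed irrational slope $a$ independent of $t_0$; granting that, the density is immediate from Kronecker's theorem and the descent of smoothness is the standard submersion lemma.
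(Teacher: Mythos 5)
Your proof is correct and takes essentially the same route as the paper: identify each fibre $p^{-1}([t_0])$ with $\Tor$, observe that $X$ restricts there to a constant vector field of irrational slope $a$ (the factor $\lambda^{t_0}$ only rescaling it), and invoke Kronecker's density theorem. You additionally make explicit the fibre identification via the $\Gamma_3$-action and the deduction of the ``in particular'' statement (smoothness of $\phi$ via local sections of the submersion $p$), details the paper asserts without proof, but the core argument is identical.
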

\begin{proof}
	We shall identify $\mathbb{S}^1$ with $\R/\Z$. Fix $[t]\in \mathbb{S}^1$ and consider the diffeomorphism:
	$$ \Phi_t:\Tr^2\longrightarrow p^{-1}[t],\;\;\;[x,y]\mapsto[x,y,t].$$
	Then define the vector field $\hat{X}$ on $\Tr^2$ given by: $$\hat{X}_{[x,y]}=T_{[x,y,t]}\phi_t^{-1}(X_{[x,y,t]})=\lambda^t\left(\frac{\partial}{\partial x}+a\frac{\partial}{\partial y}\right).$$ 
	Since $a$ is irrational we get that the family $\{1,a\}$ is $\Q$-linearly independent and thus the orbits of $\hat{X}$ are dense in $\Tr^2$, consequently the orbits of $X$ are dense in $p^{-1}[t]$, this proves the assertion since $t$ is arbitrary.
\end{proof}
The following Lemma is of central importance for the development of this paragraph and for the computations of the next section:
\begin{Lemma}
	\label{lemmadensedis}
	Let $f\in\Cinf(\mathbb{T}^3_A)$ then for every $s\in\R$ we have the following formula:
	\begin{equation}
	Z\big((\phi_s^X)^*(f)\big)=-s(\phi_s^X)^*\big(X(f)\big)+(\phi_s^X)^*\big(Z(f)\big).
	\end{equation}
	In particular $Z(\gamma^*f)=-X(\gamma^*f)+\gamma^*(Z(f))$ and $i_Z\circ\gamma^*=-\gamma^*\circ i_X +\gamma^*\circ i_Z$, where $\gamma:=\phi_1^X$.
\end{Lemma}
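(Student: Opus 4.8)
The plan is to recognize the identity as the integrated form of the structure equations \eqref{brackethyp}. Since $X$, $Y$, $Z$ are the left-invariant vector fields attached to $v$, $w$, $e$, their Lie brackets are governed by \eqref{brackethyp}; in particular $[X,Z]=-[Z,X]=X$, while $[X,X]=0$. These relations descend to $\mathbb{T}^3_A$ through the parallelism $\{X,Y,Z\}$, so it suffices to argue with the vector fields $X$ and $Z$ and their flows.

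First I would compute the pullback of the vector field $Z$ along the flow of $X$. Using the standard identity $\frac{d}{ds}(\phi_s^X)^*V=(\phi_s^X)^*(L_XV)$, valid for any vector field $V$, together with $L_XZ=[X,Z]=X$ and the invariance of $X$ under its own flow (i.e. $(\phi_s^X)^*X=X$, since $L_XX=0$), one gets $\frac{d}{ds}(\phi_s^X)^*Z=X$. Integrating from $s=0$, where $(\phi_0^X)^*Z=Z$, yields the clean formula
\begin{equation*}
(\phi_s^X)^*Z=Z+sX .
\end{equation*}

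The identity for functions is then obtained by naturality of the pullback: for any diffeomorphism $\psi$, vector field $V$ and function $g$ one has $\psi^*(V(g))=(\psi^*V)(\psi^*g)$. Applying this with $\psi=\phi_s^X$, $V=Z$, $g=f$ and inserting $(\phi_s^X)^*Z=Z+sX$ gives
\begin{equation*}
(\phi_s^X)^*\big(Z(f)\big)=Z\big((\phi_s^X)^*f\big)+s\,X\big((\phi_s^X)^*f\big),
\end{equation*}
while the same naturality applied with $V=X$ and $(\phi_s^X)^*X=X$ gives $X\big((\phi_s^X)^*f\big)=(\phi_s^X)^*\big(X(f)\big)$. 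Substituting the latter into the former and rearranging produces exactly the stated formula. Taking $s=1$ and $\gamma=\phi_1^X$ gives $Z(\gamma^*f)=-X(\gamma^*f)+\gamma^*(Z(f))$; the operator identity $i_Z\circ\gamma^*=-\gamma^*\circ i_X+\gamma^*\circ i_Z$ follows in the same way from $\gamma^*Z=Z+X$ together with the naturality relation $i_{\gamma^*V}\circ\gamma^*=\gamma^*\circ i_V$, used for $V=Z$ and, noting $\gamma^*X=X$, for $V=X$.

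I expect the only delicate point to be bookkeeping with signs and conventions: the sign in $[X,Z]$ versus $[Z,X]$, the direction of the Lie-derivative/flow identity, and the fact that a vector field is invariant under its own flow. As a cross-check — or as an alternative, completely elementary proof — one can argue in the coordinates $(x,y,t)$: since the vector field $X$ in \eqref{vechyp} has no $\frac{\partial}{\partial t}$-component, its flow is $\phi_s^X(x,y,t)=(x+s\lambda^t,\,y+sa\lambda^t,\,t)$, and differentiating $(\phi_s^X)^*f=f\circ\phi_s^X$ with respect to $t$ by the chain rule, then comparing with the explicit expressions for $X(f)$ and $Z(f)=-\tfrac{1}{\log\lambda}\frac{\partial f}{\partial t}$, yields the identity directly.
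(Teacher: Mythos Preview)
Your argument is correct. The paper, by contrast, proves the lemma by a direct coordinate computation: it writes out the flow $\phi_s^X(x,y,t)=(x+s\lambda^t,\,y+as\lambda^t,\,t)$ and differentiates $f\circ\phi_s^X$ in the $t$-direction via the chain rule, then splits the resulting tangent vector into its $X$- and $Z$-components. This is exactly the ``cross-check'' you sketch at the end. Your primary route is genuinely different and more structural: you identify the formula as the integrated form of the bracket relation $[X,Z]=X$ by computing $(\phi_s^X)^*Z=Z+sX$ and then invoking the naturality identities $\psi^*(V(g))=(\psi^*V)(\psi^*g)$ and $i_{\psi^*V}\circ\psi^*=\psi^*\circ i_V$. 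The advantage of your approach is that it makes transparent \emph{why} the identity holds and would carry over verbatim to any pair of vector fields satisfying $[X,Z]=X$; the paper's computation, while more pedestrian, has the virtue of being self-contained and of implicitly verifying the flow and the bracket relation along the way.
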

\begin{proof}
	For any $(x,y,t)\in\R^3$, a straightforward computation gives that:
	\begin{eqnarray*}
		Z\big((\phi_s^X)^*(f)\big)(x,y,t)&=&-\frac{1}{\log\lambda}d(f\circ\phi_s^X)_{(x,y,t)}(0,0,1)\\
		&=&-\frac{1}{\log\lambda}\ddu (f\circ\phi_s^X)(x,y,t+u)\\
		&=&-\frac{1}{\log\lambda}\ddu f(s\lambda^{t+u}+x,as\lambda^{t+u}+y,t+u)\\
		&=&-\frac{1}{\log\lambda} (df)_{\phi_s^X(x,y,t)}(s\log(\lambda)\lambda^t,as\log(\lambda)\lambda^t,1)\\
		&=&-s(df)_{\phi_s^X(x,y,t)}(\lambda^t,a\lambda^t,0)-\frac{1}{\log\lambda}(df)_{\phi_s^X(x,y,t)}(0,0,1)\\
		&=&-s (X(f)\circ\phi_s^X)(x,y,t)+(Z(f)\circ\phi_s^X)(x,y,t).
	\end{eqnarray*}
	Which achieves the proof.
\end{proof}
\begin{Corol}
	\label{coroldendis1}
	Let $f\in\Cinf(\mathbb{T}^3_A)$ such that $f=\gamma^*f$ with $\gamma:=\phi_1^X$. Then $X(f)=0$ and consequently $f=p^*\psi$ with $\psi\in\Cinf(\mathbb{S}^1)$.
\end{Corol}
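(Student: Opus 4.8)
The plan is to turn the hypothesis into an identity whose right-hand side grows linearly in a discrete parameter, while its left-hand side stays bounded because precomposition with a diffeomorphism of a \emph{compact} manifold preserves the sup norm; this will force the growing term to vanish.

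First I would iterate the hypothesis: since $\gamma=\phi_1^X$ and hence $\gamma^n=\phi_n^X$, the relation $f=\gamma^*f$ gives $f=(\phi_n^X)^*f$ for every $n\in\Z$. Applying Lemma \ref{lemmadensedis} with $s=n$ and using $(\phi_n^X)^*f=f$ on the left-hand side, a rearrangement yields
\[
(\phi_n^X)^*\big(Z(f)\big)\;=\;Z(f)+n\,(\phi_n^X)^*\big(X(f)\big),\qquad n\in\Z .
\]
Now take the sup norm $\|\cdot\|_\infty$ over the compact manifold $\mathbb{T}^3_A$. Since each $\phi_n^X$ is a diffeomorphism of $\mathbb{T}^3_A$ we have $\|(\phi_n^X)^*h\|_\infty=\|h\|_\infty$ for every continuous function $h$; applying this on the left with $h=Z(f)$ and inside the last term on the right with $h=X(f)$, the triangle inequality gives
\[
\|Z(f)\|_\infty \;\geq\; |n|\,\|X(f)\|_\infty-\|Z(f)\|_\infty ,
\]
so that $|n|\,\|X(f)\|_\infty\leq 2\|Z(f)\|_\infty$ for all $n$. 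Letting $|n|\to\infty$ forces $\|X(f)\|_\infty=0$, i.e. $X(f)=0$ on $\mathbb{T}^3_A$. Then Proposition \ref{propoXhyp} immediately yields $f=p^*\psi$ for some $\psi\in\Cinf(\mathbb{S}^1)$, which is the claim.

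Everything here is routine; the one point worth being careful about is that $X(f)$ is a priori not $\gamma$-invariant, which is why I keep the term $(\phi_n^X)^*(X(f))$ and estimate its norm via the diffeomorphism-invariance of $\|\cdot\|_\infty$ rather than trying to simplify it away first. (Alternatively one could iterate the $s=1$ identity $\gamma^*(Zf)=Zf+Xf$ $n$ times, but that route does require first observing $\gamma^*(Xf)=Xf$ because the flow of $X$ commutes with $X$.) I do not expect a genuine obstacle.
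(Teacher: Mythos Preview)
Your argument is correct and is essentially the paper's own proof: iterate the invariance to $f=(\gamma^n)^*f$, plug into Lemma~\ref{lemmadensedis}, and let the linear-in-$n$ term collide with the uniform bound $2\lVert Z(f)\rVert_\infty$ coming from compactness, then invoke Proposition~\ref{propoXhyp}. The only cosmetic difference is that the paper writes the growing term as $-nX(f)$ rather than $-n(\phi_n^X)^*(X(f))$; these agree because $\gamma$-invariance of $f$ forces $\gamma$-invariance of $X(f)$ (your parenthetical remark), so your extra caution is harmless.
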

\begin{proof}
	Since $f=\gamma^*f$ we get that for every $n\in\Z$, $f=(\gamma^n)^*(f)$ thus the preceding lemma gives that:
	$$ Z(f)=-nX(f)+(\gamma^n)^*(Z(f)).$$
	Consequently we obtain that for every $n\in\Z$:
	$$|X(f)|\leq \frac{1}{n} \big(\lVert Z(f)\lVert_{\infty}+\lVert (\gamma^n)^*(Z(f))\lVert_{\infty}\big)\leq \frac{2}{n}\lVert Z(f)\lVert_{\infty},$$
	which leads to $X(f)=0$ and achieves the proof.
\end{proof}
\noindent In what follows we denote $M:=\Tr^3_A$. It is straightforward to check that:
$$d\alpha=-\alpha\wedge\theta,\;\;\;d\beta=\beta\wedge\theta,\;\;\;d\theta=0.$$
and that $L_X\alpha=-\theta$ and $L_X\beta=L_X\theta=0$ thus $L_X(\alpha\wedge\beta\wedge\theta)=0$.\\
Let $\tau:\R\longrightarrow\Chi(M)$ be the Lie algebra homomorphism corresponding to the vector field $X$ and  $\rho:\Z\longrightarrow \Diff(M)$ the discrete action  generated by $\gamma:=\phi_1^X$ where $\phi^X$ is the flow of $X$, that is, $\rho(n)(x)=\phi_n^X(x)$ for any $n\in \Z$.
\begin{Theorem}
\label{thcodiv}
The homomorphism $I:\mathcal{C}_\tau(M)\longrightarrow \Omega(M)_\rho$ defined in \eqref{opcodiv} is an isomorphism.
\end{Theorem}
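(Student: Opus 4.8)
The plan is as follows. By Proposition \ref{propcodiv} the operator $I$ is a cochain map carrying $\mathcal{C}_\tau(M)$ \emph{onto} $\Omega(M)_\rho$, so it remains only to prove that $I$ is injective on $\mathcal{C}_\tau(M)$. Since $M$ is compact, $\mathcal{C}_\tau(M)=L_X(\Omega(M))$, so any $\eta$ in it has the form $\eta=L_X\omega$; and the computation carried out inside the proof of Proposition \ref{propcodiv} gives $I(L_X\omega)=\gamma^*\omega-\omega$ with $\gamma=\phi_1^X$. Hence $I(\eta)=0$ forces $\gamma^*\omega=\omega$, and everything reduces to the following form-valued strengthening of Corollary \ref{coroldendis1}, which I will call the \emph{key claim}: if $\omega\in\Omega(M)$ satisfies $\gamma^*\omega=\omega$, then $L_X\omega=0$.

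To prove the key claim I would exploit the global coframe $\{\alpha,\beta,\theta\}$. First, from $L_X\alpha=-\theta$ and $L_X\beta=L_X\theta=0$ one integrates the flow to obtain $(\phi^X_s)^*\theta=\theta$, $(\phi^X_s)^*\beta=\beta$, $(\phi^X_s)^*\alpha=\alpha-s\theta$; so $\gamma^*$, and each $(\gamma^n)^*$, fixes $\beta$ and $\theta$, sends $\alpha$ to $\alpha-\theta$, and therefore preserves the subalgebra $\mathcal{A}$ of forms whose coframe expression involves no $\alpha$, acting on $\mathcal{A}$ merely by precomposing the coefficient functions. Then I would write $\omega=\alpha\wedge\mu+\nu$ uniquely with $\mu=i_X\omega$ and $\nu=\omega-\alpha\wedge i_X\omega$ both in $\mathcal{A}$, expand $\gamma^*(\alpha\wedge\mu+\nu)=\alpha\wedge\gamma^*\mu+(\gamma^*\nu-\theta\wedge\gamma^*\mu)$, and compare the $\alpha$-component with the $\alpha$-free one: the hypothesis $\gamma^*\omega=\omega$ becomes the pair of equations $\gamma^*\mu=\mu$ and $\gamma^*\nu=\nu+\theta\wedge\mu$.

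These two equations I would then handle by the two mechanisms already present in the paper. For $\gamma^*\mu=\mu$: expanding $\mu$ in the $\{\beta,\theta\}$-monomials and matching the pointwise-independent coefficients shows that every coefficient of $\mu$ is $\gamma$-invariant, hence $X$-invariant by Corollary \ref{coroldendis1}; since $L_X$ annihilates $\beta$ and $\theta$, this yields $L_X\mu=0$. For $\gamma^*\nu=\nu+\theta\wedge\mu$: iterating and using $\gamma^*\mu=\mu$ gives $(\gamma^n)^*\nu=\nu+n\,\theta\wedge\mu$ for every $n\geq 1$, so each coefficient $d_J$ of $\nu$ and $m_J$ of $\theta\wedge\mu$ satisfies $d_J\circ\gamma^n=d_J+n\,m_J$; as $\gamma^n$ is a diffeomorphism of the compact manifold $M$ we have $\|d_J\circ\gamma^n\|_\infty=\|d_J\|_\infty$, hence $n\|m_J\|_\infty\leq 2\|d_J\|_\infty$ for all $n$ and so $m_J=0$. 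Thus $\theta\wedge\mu=0$, whence $\gamma^*\nu=\nu$ and, as in the first case, $L_X\nu=0$. Finally $L_X\omega=(L_X\alpha)\wedge\mu+\alpha\wedge L_X\mu+L_X\nu=-\theta\wedge\mu=0$, which is the key claim, and the theorem follows.

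The hard part, and the reason the argument has to be routed through the coframe, is that one cannot mimic the growth estimate of Corollary \ref{coroldendis1} directly on forms: $(\gamma^n)^*$ is not uniformly bounded in $n$ on $\Omega(M)$, because the differential of $\gamma^n=\phi^X_n$ grows linearly in $n$ along the $\partial/\partial t$ direction, so $\|(\gamma^n)^*\omega\|$ need not stay bounded. Decomposing along $\{\alpha,\beta,\theta\}$ is exactly what pushes the whole problem down to the coefficient functions, where $(\gamma^n)^*$ is just composition with a diffeomorphism and hence an $\|\cdot\|_\infty$-isometry; the unipotent shape of $\gamma^*$ on the coframe and the relations $L_X\beta=L_X\theta=0$ are what let the bookkeeping close up. The only other thing to be careful about is that the decomposition $\omega=\alpha\wedge i_X\omega+\nu$ and the coefficient matchings are legitimate, which holds because $\{\alpha,\beta,\theta\}$ is a genuine global coframe on $M$.
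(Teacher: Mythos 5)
Your proposal is correct and follows essentially the same route as the paper: reduce to injectivity via Proposition \ref{propcodiv}, use $I(L_X\omega)=\gamma^*\omega-\omega$, and exploit that $\gamma^*$ fixes $\beta$ and $\theta$ while sending $\alpha\mapsto\alpha-\theta$, so that coefficient matching in the global coframe, Corollary \ref{coroldendis1}, and the linear-growth sup-norm estimate force $L_X\omega=0$. The only difference is organizational: the paper treats the degrees $0,1,2,3$ separately, whereas your decomposition $\omega=\alpha\wedge i_X\omega+\nu$ packages all degrees into one uniform argument.
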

\begin{proof}
	In view of Proposition \ref{propcodiv} it only remains to prove that $I$ is injective. Choose $\eta\in\mathcal{C}_\tau(M)$ and write $\eta=L_X\omega$ for some $\omega\in\Omega(M)$. Assume that $I(\eta)=0$, in view of the previous computation this is equivalent to $\omega=\gamma^*\omega$.\\\\
	If $\eta\in\mathcal{C}_\tau^0(M)$ then Corollary \ref{coroldendis1} gives that $\omega=p^*\phi$ for some $\phi\in\Cinf(\mathbb{S}^1)$, thus $\eta=0$. On the other hand if $\eta\in\mathcal{C}_\tau^3(M)$ we can write $\eta=X(f)\alpha\wedge\beta\wedge\theta$ for some $f\in\Cinf(M)$ satisfying $f=\gamma^*f$ and so by Corollary \ref{coroldendis1} we get $\eta=0$.\\\\ Now for $\eta\in\mathcal{C}_\tau^1(M)$ we can write:
	$$ \omega=f\alpha+g\beta+h\theta,\;\;\;\;\;f,g,h\in\Cinf(M).$$
	Applying $I$ to $L_X\alpha=-\theta$ leads to $\gamma^*\alpha=\alpha-\theta$. Moreover since $\theta$ and $\beta$ are $\phi^X$-invariant we get that $\beta=\gamma^*\beta$ and $\theta=\gamma^*\theta$ thus:
	$$ \gamma^*\omega=(\gamma^*f)\alpha+(\gamma^*g)\beta+(\gamma^*h-\gamma^*f)\theta,$$
	hence $\omega=\gamma^*\omega$ is equivalent to $f=\gamma^*f$, $g=\gamma^*g$ and $h=\gamma^*h-f$. The last relation then implies that $h-(\gamma^n)^*h=nf$ for all $n\in\Z$ and thus:
	$$\lVert f\lVert_\infty\leq \frac{1}{n}\lVert h-(\gamma^n)^*h\lVert_\infty\leq\frac{2}{n}\lVert h\lVert_\infty\underset{n\rightarrow+\infty}{\longrightarrow} 0.$$
	Therefore $f=0$ and $h=\gamma^*h$, $g=\gamma^*g$ which according to Corollary \ref{coroldendis1} gives that $X(g)=0$ and $X(h)=0$, and so using that $L_X\beta=0$ and $L_X\theta=0$ it follows that $\eta=L_X\omega=0$. Finally let $\eta\in\mathcal{C}^2_\tau(M)$ and write:
	$$ \omega=f\alpha\wedge\beta+g\alpha\wedge\theta+h\beta\wedge\theta,\;\;\;\;\;f,g,h\in\Cinf(M).$$
	Then using $\gamma^*\alpha=\alpha-\theta$ we obtain that:
	$$\gamma^*\omega=(\gamma^*f)\alpha\wedge\beta+(\gamma^*g)\alpha\wedge\theta+(\gamma^*h+\gamma^*f)\beta\wedge\theta,$$
	and so $\omega=\gamma^*\omega$ is equivalent in this case to $f=\gamma^*f$, $g=\gamma^*g$ and $h=\gamma^*h+\gamma^*f$. As before, this leads to $f=0$, $X(g)=0$ and $X(h)=0$ and so:
	$$ \eta=L_X\omega=L_X(g\alpha\wedge\theta+h\beta\wedge\theta)=g L_X(\alpha\wedge\theta)=-g\theta\wedge\theta=0.$$
	Thus $I:\mathcal{C}_\tau(M)\longrightarrow\Omega(M)_\rho$ is an isomorphism.
\end{proof}
This result gives in particular that $\mathrm{H}(\mathcal{C}_\tau(M))\simeq\mathrm{H}(\Omega(M)_\rho)$ and therefore we only need to compute the cohomology of $\rho$-co-invariant forms in this case.
\section{Cohomology computation}
We now have all the necessary ingredients to perform our computation. Let $M$ denote the hyperbolic torus $\mathbb{T}^3_A$ defined in the previous section with $A$ having irrational eigenvalues and let $X,Y,Z\in\Chi(M)$ be the vector fields defined in \eqref{vechyp} with respective dual $1$-forms $\alpha$, $\beta$ and $\theta$. Define the action $\rho:\Z\longrightarrow\Diff(M)$ to be the discrete flow of the vector field $X$ with $\gamma:=\rho(1)$. The main goal is to prove that first and second co-invariant cohomology groups are infinite-dimension, however we shall compute the whole cohomology in order get a global picture.\\\\
\noindent\textsc{Calculating} $\mathrm{H}^0(\Omega(M)_\rho)$: Choose $f\in\Omega^0(M)_\rho$ such that $df=0$, then $f$ is a constant function equal to $g-\gamma^*g$ for some $g\in\Cinf(M)$. Consequently we obtain that:
$$\int_{M}f\alpha\wedge\beta\wedge\theta=\int_{M}(g-\gamma^*g)\alpha\wedge\beta\wedge\theta=\int_{M}g\alpha\wedge\beta\wedge\theta-\int_{M}\gamma^*(g\alpha\wedge\beta\wedge\theta)=0.$$
Thus $f=0$ and we conclude that $\mathrm{H}^0(\Omega(M)_\rho)=0$.\\\\
\textsc{Calculating} $\mathrm{H}^1(\Omega(M)_\rho)$: We prove that $\mathrm{H}^1(\Omega(M)_\rho)$ is infinite dimensional. In order to do so, we prove that the map $p^*:\Omega^1(\mathbb{S}^1)\longrightarrow\mathrm{H}^1(\Omega(M)_\rho)$ is well-defined and injective or equivalently we can show that $p^*(\Omega^1(\mathbb{S}^1))\subset \mathrm{Z}^1(\Omega(M)_\rho)$ and $p^*(\Omega^1(\mathbb{S}^1))\cap \mathrm{B}^1(\Omega(M)_\rho)=0$.\\\\ An element $\eta\in p^*(\Omega^1(\mathbb{S}^1))$ can always be written as $\eta=p^*(\phi)\theta$ where $\phi\in\Cinf(\mathbb{S}^1)$. Since $L_X\theta=0$ and $L_X\alpha=-\theta$, then by applying $I$ to $L_X\alpha$ we get that $\theta=\alpha-\gamma^*\alpha$, therefore: 
$$\eta=p^*(\phi)\theta=p^*(\phi)\alpha-\gamma^*(p^*(\phi)\alpha).$$
Moreover observe that $d\eta=0$, hence we deduce that $p^*(\Omega^1(\mathbb{S}^1))\subset \mathrm{Z}^1(\Omega(M)_\rho)$. Now suppose $\eta=d(g-\gamma^*g)$ then clearly $X(g-\gamma^*g)=0$ and $Z(g-\gamma^*g)=p^*(\phi)$, thus according to Proposition \ref{propoXhyp}, $g-\gamma^*g=p^*\psi$ for some $\psi\in\Cinf(\mathbb{S}^1)$. By induction we can show that for any $n\in\N$, $g=\rho(n)^*g+np^*\psi$ which then leads to:
\begin{equation}
\label{modeleq}
|p^*\psi|\leq \frac{1}{n}|g-\rho(n)^*g|\leq\frac{2}{n}\lVert g\lVert_\infty\underset{n\rightarrow+\infty}{\longrightarrow}0.
\end{equation}
Hence $p^*\psi=g-\gamma^*g=0$ and so $\eta=0$. Thus $p^*(\Omega^1(\mathbb{S}^1))\cap \mathrm{B}^1(\Omega(M)_\Z)=0$.\\\\
\textsc{Calculating} $\mathrm{H}^2(\Omega(M)_\rho)$: We will show that $p^*(\Omega^1(\mathbb{S}^1))\wedge\beta\subset\mathrm{H}^2(\Omega(M)_\rho)$. To do this, we fix a $2$-form $\eta=p^*(\phi)\theta\wedge\beta$ such that $\phi\in\Cinf(\mathbb{S}^1)$. We can easily check that $d\eta=0$, moreover from the previous calculations and the fact that $L_X\beta=0$ we get that $\beta=\gamma^*\beta$, therefore:
$$p^*(\phi)\theta\wedge\beta=(p^*\phi\alpha\wedge\beta)-\gamma^*(p^*(\phi)\alpha\wedge\beta).$$
Hence $p^*(\Omega^1(\mathbb{S}^1))\wedge\beta\subset\mathrm{Z}^2(\Omega(M)_\rho)$. Now assume that $\eta=d(\omega-\gamma^*\omega)$, then using that $[X,Y]=0$ we get $i_Yi_X(d\omega)=\gamma^*(i_Yi_Xd\omega)$ hence according to Corollary \ref{coroldendis1} we can write $i_Xi_Y d\omega=p^*\psi$ for some $\psi\in\Cinf(\mathbb{S}^1)$. On the other hand we get from  Lemma \ref{lemmadensedis} that: $$p^*\phi-i_Yi_X(d\omega)=\gamma^*(i_Zi_Yd\omega)-i_Zi_Yd\omega.$$
It follows from these remarks that $p^*(\phi-\psi)=\gamma^*(i_Zi_Yd\omega)-i_Zi_Yd\omega$, and as in \eqref{modeleq} we once again prove that $p^*(\phi-\psi)=0$, so we deduce that $p^*\phi=p^*\psi=i_Yi_X(d\omega)$. Now if we write $\omega=f\alpha+g\beta+h\theta$, then we get that $p^*\phi=X(g)-Y(f)$. Moreover, from  $X(p^*\phi)=Y(p^*\phi)=0$ we get that for every $s\in\R$:
\begin{eqnarray*}
	s^2p^*\phi&=&\int_0^s\int_0^s(\phi_t^X)^*(\phi_u^Y)^*(p^*\phi)dudt\\
	&=&\int_0^s\int_0^s(\phi_t^X)^*(\phi_u^Y)^*(X(g))dudt-\int_0^s\int_0^s(\phi_t^X)^*(\phi_u^Y)^*(Y(f))dudt\\
	&=&\int_0^s(\phi_t^X)^*X\left(\int_0^s(\phi_u^Y)^*(g)du\right)dt-\int_0^s(\phi_u^Y)^*Y\left(\int_0^s(\phi_t^X)^*(f)dt\right)du\\
	&=&(\phi_s^X)^*\left(\int_0^s(\phi_u^Y)^*(g)du\right)-\int_0^s(\phi_u^Y)^*(g)du-(\phi_s^Y)^*\left(\int_0^s(\phi_t^X)^*(f)dt\right)+\int_0^s(\phi_t^X)^*(f)dt.
\end{eqnarray*}
It follows that:
$$ s^2|p^*\phi|\leq 2\left\lVert\int_0^s(\phi_u^Y)^*(g)du\right\lVert_\infty+2\left\lVert\int_0^s(\phi_t^X)^*(f)dt\right\lVert_\infty\leq 2|s|(\lVert g\lVert_\infty+\lVert f\lVert_\infty)$$
Hence $|p^*\phi|\leq \dfrac{2}{|s|}(\lVert g\lVert_\infty+\lVert f\lVert_\infty)\underset{s\rightarrow+\infty}{\longrightarrow}0$.\\
We conclude that $\eta=0$ and $p^*(\Omega^1(\mathbb{S}^1))\wedge\beta\cap\mathrm{B}^2(\Omega(M)_\rho)=0$, in particular this proves that $\mathrm{H}^2(\Omega(M)_\rho)$ is infinite dimensional.\\\\
\textsc{Calculating} $\mathrm{H}^3(\Omega(M)_\rho)$: The elements of $\Omega^3(M)_\rho$ are of the form: 
$$(f-\gamma^*f)\alpha\wedge\beta\wedge\theta,$$
for some $f\in\Cinf(M)$. Put:
$$c=\dfrac{\int_M f\alpha\wedge\beta\wedge\theta}{\alpha\wedge\beta\wedge\theta},\;\;\;\text{then}\;\int_M (f-c)\alpha\wedge\beta\wedge\theta=0.$$
Thus $(f-c)\alpha\wedge\beta\wedge\theta=d\omega$ and since $L_X(\alpha\wedge\beta\wedge\theta)=0$ then $(\gamma^*f-c)\alpha\wedge\beta\wedge\theta=d(\gamma^*\omega)$ and therefore it follows that: $$(f-\gamma^*f)\alpha\wedge\beta\wedge\theta=d(\omega-\gamma^*\omega),$$
i.e $\mathrm{H}^3(\Omega(M)_\rho)=0$.
\newpage

\end{document}